\newtheorem{theorem}{Theorem}
\newtheorem{proposition}[theorem]{Proposition}
\newtheorem{corollary}[theorem]{Corollary}
\theoremstyle{definition}
\newtheorem{remark}[theorem]{Remark}
\numberwithin{figure}{section}
\providecommand{\keywords}[1]
{
  \small	
  \textbf{\textit{Keywords: }} #1
}
\begin{document}
\baselineskip 12pt

\newcounter{mpFootnoteValueSaver}
\begin{center}
\textbf{\Large Anti-It\^o noise-induced phase transitions in tumor growth with chemotherapy} \\
\vspace{5mm}
\begin{tabular}{cc} \qquad \qquad Helder Rojas\footnotemark   & Luis Huamanchumo\footnotemark \\
[1.2\baselineskip]
\multicolumn{2}{c}{Escuela Profesional de Ingeniería Estadística (EPIES)} \\
\multicolumn{2}{c}{Universidad Nacional de Ingeniería (UNI), Perú} \\
\end{tabular}
\stepcounter{mpFootnoteValueSaver}%
    \footnotetext[\value{mpFootnoteValueSaver}]{%
      hrojas@uni.edu.pe}%
        \stepcounter{mpFootnoteValueSaver}%
    \footnotetext[\value{mpFootnoteValueSaver}]{%
      lhuamanchumo@uni.edu.pe}
\end{center}

\begin{abstract}
  \noindent The objective of this work is to apply the H\"anggi-Klimontovich stochastic differential equations to model and study the effects of anti-tumor chemotherapy in the case of continuous infusion delivering. The fluctuations generated by variations in drug concentration are modeled by the H\"anggi-Klimontovich stochastic integral. This integral,  which in the physics literature is sometimes called anti-It\^o integral,  in the last decade  it has been referenced quite as the more appropriate stochastic integral for model various biological and physical systems. Then, we make some comparisons with the model based on It\^o stochastic differential equations and their phase transitions that they generate, showing that the H\"anggi-Klimontovich  stochastic differential equations lead to more biologically realistic results.
\end{abstract}

\keywords{Mathematical oncology, Tumor growth, H\"anggi-Klimontovich integral.}

\section{Introduction}\label{intro}
Cancerous tumors are caused by uncontrolled division of abnormal cells that continually multiply. In healthy cells, strict cell cycle regulation mechanisms prevent them from reproducing excessively, leading to normal cell growth \cite{genetic_human, text_oncology}. However, failures in cellular regulation, often caused by genetic predispositions and environmental factors, allow cells to escape cellular control and become carcinogenic cells through the accumulation of various genetic mutations, see Figure \ref{fig:cancer_growth}. In some cases, the cells that managed to escape cellular control are identified by cells of the immune system and are immediately eliminated. Unfortunately, in some other cases, depending on their accumulated mutations, these cancer cells manage to circumvent the immunological defense mechanism. If this occurs, the abnormal cells reproduce non-stop, a process known as cancerous proliferation. In the absence of drugs, proliferation continuous and damages surrounding tissues, spreads through the blood (metastasis), and usually causes death.
\begin{figure}[h]
     \centering
     \begin{subfigure}[b]{0.42\textwidth}
         \centering
         \includegraphics[width=\textwidth]{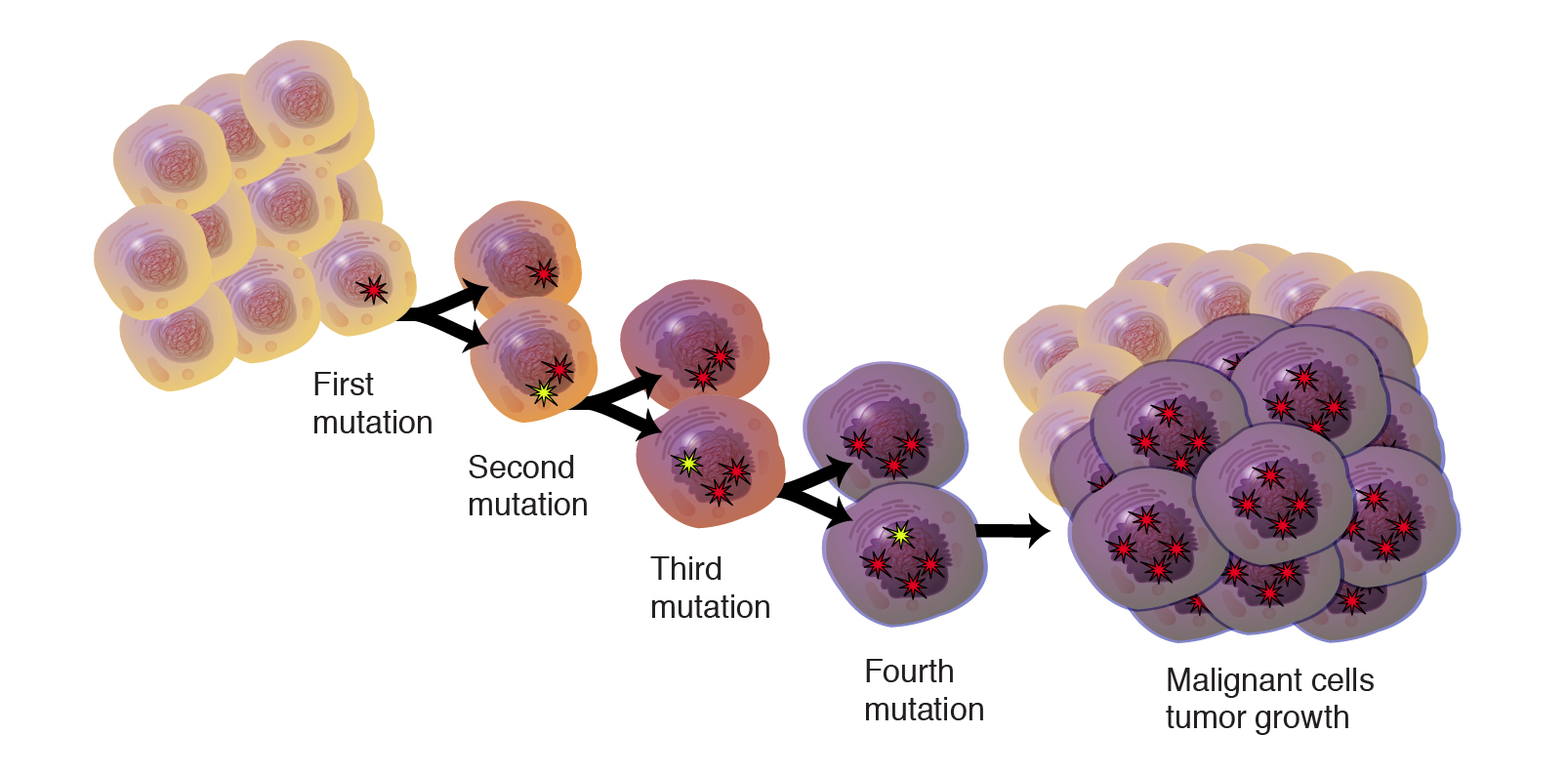}
          \caption{}
     \end{subfigure}
     \qquad
     \begin{subfigure}[b]{0.3\textwidth}
         \centering
         \includegraphics[width=\textwidth]{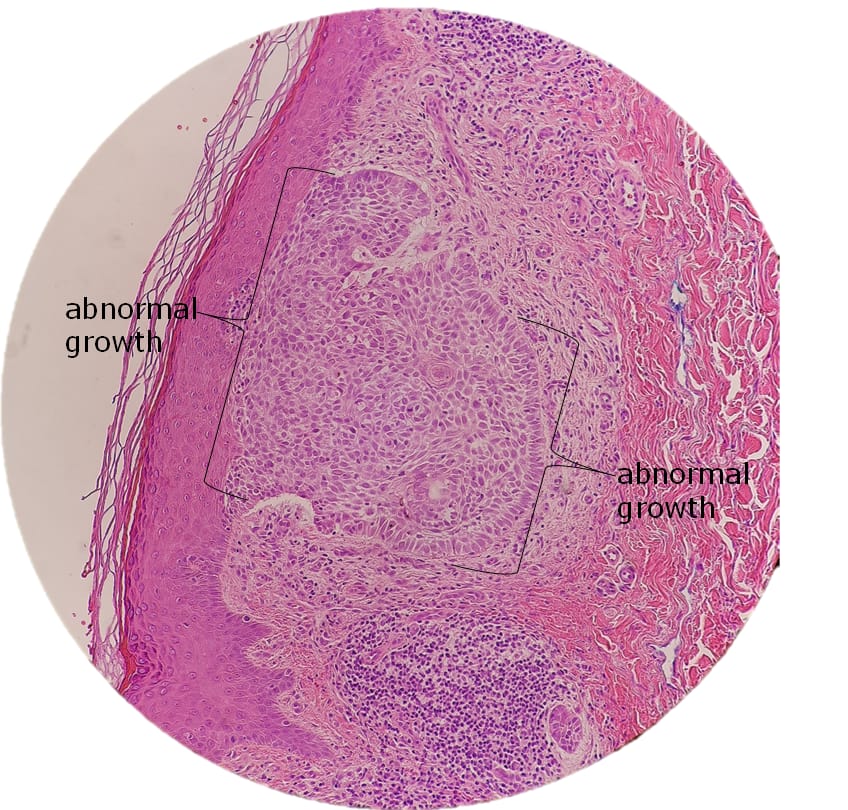}
         \caption{}
     \end{subfigure}
       \caption{(a) Uncontrolled mitosis at from genetic mutations. (b) Light microscope image of a cross section of tissue from a patient with cancer, abnormal cell growth is in brackets on either side. The shape of cancerous tissue is approximately circular.}
        \label{fig:cancer_growth}
\end{figure}

During the avascular phase, characterized by restrictions in the supply of nutrients and oxygen, abnormal growth is initially exponential. However, as the tumor grows, growth slows down and tends to stabilize, which depends on the logistic restrictions of its environment. This quasi-universal tumor dynamic, which is the same for different types of cancer, from different organs, with different phenotypes or genotypes, and both in vivo and in vitro, leads to a sigmoidal growth curve \cite{hart1998growth, norton2008cancer}. Based on these biomechanical characteristics of cancer,  many models have been proposed for tumor growth in the avascular phase, generally these models are expressed in terms of ordinary differential equations (ODE), of which we can highlight the logistic differential equation and the Gompertz differential equation \cite{wodarz2014dynamics}. 

One of the most flexible and robust of this family of models is the generalized logistic differential equation, for which the logistic and Gompertz models are particular cases. In this model, the number of cancer cells at time $t\geq 0$ is denoted by $x_t$ and, in the absence of drugs, its evolution is governed by the equation: $\mathrm{d}x_t=x_t(q-rx^v_t)\,\mathrm{d}t$, with $x_0>0$, where $q, r, v >0$ are parameters. Here $q$ is the exponential proliferation rate of first phase and $r$ is a parameter that characterizes the nutrient supply capacity of the region where the tumor is located. After diagnosis, to combat metastasis, patients undergo chemotherapy, in many cases, before and after a surgical intervention. In order to model the delivery and effect of the drug on the growth of cancer cells, the factor $g(\gamma(t))\cdot x_t$ is included, where $\gamma(t)$ is dose of the drug applied at time $t$ and $g(\gamma(t))$ is the destruction rate per cancer cell. Then, the cancer tumor growth model under influence of drugs is given by  $\mathrm{d}x_t=\big((q-rx^v_t)-g(\gamma(t))\big)\,x_t\,\mathrm{d}t$, with $x_0>0$, see \cite{krabs2007modelling}. 

An alternative for drug delivery, which partially reduces side effects, is continuous infusion of the drug. Therefore, that allows us to assume $g(\gamma(t))$ is approximately constant, so the anti-tumor chemotherapy model becomes
\begin{equation}\label{deterministic_model}
\frac{\mathrm{d}x_t}{\mathrm{d}t}=x_t(q-rx^v_t)-cx_t, \qquad x_t|_{t=0}=x_0>0,
\end{equation}
where $c>0$. As in many cases, if chemotherapy is prolonged for a long time, it is natural to be interested in the steady-state behavior of the deterministic model \eqref{deterministic_model}. It is easy to verify that this model has the following asymptotic properties:
\begin{itemize}
    \item If $q-c> 0$, then $x=0$ is an unstable fixed point and $x=\big(\frac{q-c}{r}\big)^{\frac{1}{v}}$ is an stable fixed point of the dynamical system generated by the ODE \eqref{deterministic_model}.
    \item However, if $q-c\leq 0$, then $x=0$ becomes a stable fixed point and attractor for the dynamical system.
\end{itemize}
\subsection{Random fluctuations in therapy delivery}
It is known that in most cases there are considerable variations in the concentration of the drug delivery to patients. Therefore, the constancy of $g(\gamma(t))$ in \eqref{deterministic_model} turns out to be just an idealization. To incorporate such random fluctuations due to variations in drug concentration, and sometimes also due to variations in the proliferation rate $q$, it is customary to modify model \eqref{deterministic_model} with the incorporation of multiplicative ``white noise", so the resulting model reads
\begin{equation}\label{SDE}
\frac{\mathrm{d}X_t}{\mathrm{d}t}=X_t(q-rX^v_t)-cX_t+\sigma X_t\,\xi_t, \qquad X_t|_{t=0}=X_0>0,
\end{equation}
$X_0$ a given random variable; of course, the solution $X_t$ has become a stochastic process rather than a deterministic function. Equation \eqref{SDE} is what is known in the literature as a stochastic differential equation (SDE). To cast Equation \eqref{SDE}  in the form of a well-defined SDE one should, first of all, consider its integral version
\begin{equation}\label{interpretation}
    	X_t=X_0+\int\limits_0^t \big(X_s(q-rX^v_s)-cX_s+\sigma X_s\big)\,\mathrm{d}s+ \sigma``\int\limits_0^t X_s\,\xi_s\,\mathrm{d}s\,\,",\quad t\geq 0.
\end{equation}
The second, and fundamental, step is to give a precise meaning to the integral between quotes, which cannot be considered neither as a Riemann nor as a Lebesgue integral. The two most famous stochastic integrals for this purpose are the It\^o integral and the Stratonovich integral, which lead to the It\^o calculus and Stratonovich calculus respectively. There has been a long debate, known as the It\^o versus Stratonovich dilemma, about which is the most appropriate integral to model dynamic systems disturbed with noise, a debate that is still open \cite{van1981ito, mannella2012ito}. Even this debate has been carried out in the field of biology \cite{turelli1977random}. However, the stochastic calculus that has been most used to model biological and oncological systems is the It\^o calculus. The notable preference towards the It\^o calculus has been due in large part to its good mathematical properties that are very useful when studying this type of systems. To understand and use the properties of It\^o calculus, see \cite{kuo, oksendal}.

If the It\^o calculus is adopted, Equation \eqref{interpretation} becomes the following It\^o-SDE
\begin{equation}\label{Ito_SDE_0}
\mathrm{d}X_t=\big(X_t(q-rX^v_t)-cX_t\big)\,\mathrm{d}t+\sigma X_t\,\mathrm{d}B_t, \qquad X_t|_{t=0}=X_0>0,
\end{equation}
where $B_t$ is a Brownian motion. Some versions of model \eqref{Ito_SDE_0}, adopting It\^o calculus, have been studied in \cite{lisei2008stochastic, donofrio}.

\subsection{It\^o calculus disadvantages in chemotherapy model}

In \cite{donofrio, donofrio2}, potential problems associated with the use of stochastic It\^o calculus in chemotherapy models are presented and discussed. In these works it is shown that It\^o calculus, the stochastic model \eqref{Ito_SDE_0}, leads to biologically paradoxical results . These can be summarized in two main elements:
\begin{itemize}
    \item If $q-c>0$, a noise with a sufficiently high intensity $\sigma$ can induce the tumor eradication almost surely. Specifically, if $\sigma^2\geq2(q-c)$, then tumor the eradication occurs with probability 1. Furthermore, noise-induced phase transitions in this regime appear to be synthetic and confusing biological meaning.
    
    \item If $q-c\leq 0$, the presence of noise does not change the result of the therapy with respect to the deterministic model \eqref{deterministic_model}.
\end{itemize}
The noise-induced tumor eradication, described in item one, is unrealistic and is easily questioned on biological grounds, for more details see \cite{donofrio, donofrio2}. 
\\

On the other hand, in the last decades a third stochastic calculus, given by the so called H\"anggi-Klimontovich integral, has been proposed as better adapted to describe certain biological and physical systems, particularly in statistical mechanics.  Recently, this integral was introduced in a mathematically precise way and its connections with the It\^o integral have been studied, see \cite{escudero2023versus}. This integral is not uniquely known as the H\"anggi-Klimontovich one; sometimes authors refer to it as the backward-It\^o, anti-It\^o, isothermal, or kinetic integral.

If the H\"anggi-Klimontovich calculus is adopted, Equation \eqref{interpretation} becomes the following H\"anggi-Klimontovich stochastic differential equation (HK-SDE)
\begin{equation}\label{HK_0}
\mathrm{d}X_t=\big(X_t(q-rX^v_t)-cX_t\big)\,\mathrm{d}t+\sigma X_t\bullet\mathrm{d}B_t, \qquad X_t|_{t=0}=X_0>0.
\end{equation}

In this paper, we are going to work with the H\"anggi-Klimontovich calculus and we are going to study the consequences and properties of that choice. We show that, in contrast to model \eqref{Ito_SDE_0}, model \eqref{HK_0} provides more biologically coherent results. We prove that for tumor-chemotherapy interaction models type \eqref{deterministic_model}, the H\"anggi-Klimontovich  calculus seems to be the most suitable for modeling the inclusion of random perturbations in contrast to the It\^o calculus.

\subsection{Mathematical background}
Under certain regularity conditions, there is an analytical equivalence between the It\^o and H\"anggi-Klimontovich formulations of an SDE.
\begin{theorem} [Conversion rule for HK-SDEs]\label{conversion}
Let $f:\mathbb{R}\times\mathbb{R}^{+} \longrightarrow \mathbb{R}$ and  $g:\mathbb{R}\times\mathbb{R}^{+} \longrightarrow \mathbb{R}$ be two functions continuous and satisfy the Lipschitz and linear growth conditions in $x$. Assume that $g$ is continuously differentiable such that $\frac{\partial g}{\partial x}g$ satisfies the Lipschitz and linear growth conditions in $x$. Then the unique solution to the It\^o-SDE
\begin{equation}\label{transformation_SDE}
\mathrm{d}X_t= \Big[f(X_t, t)+\frac{\partial g}{\partial x}(X_t,t)g(X_t, t)\Big]\,\mathrm{d}t+ g(X_t, t)\,\mathrm{d}W_t, \qquad X_0=x_0,
\end{equation}
solves the HK-SDE
\begin{equation}\label{HK-SDE}
\mathrm{d}X_t= f(X_t, t)\,\mathrm{d}t+g(X_t, t)\bullet\mathrm{d}W_t, \qquad X_0=x_0.
\end{equation}
almost surely. Correspondingly, there almost surely exists a unique solution to~\eqref{HK-SDE} that is given
by the solution to~\eqref{transformation_SDE}.
\end{theorem}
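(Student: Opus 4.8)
The plan is to reduce both equations to a single Itô-SDE by establishing the conversion identity for the backward (anti-Itô) integral, and then to invoke the classical Itô existence–uniqueness theorem for the resulting equation. Writing \eqref{HK-SDE} in integral form,
\[
X_t = x_0 + \int_0^t f(X_s, s)\,\mathrm{d}s + \int_0^t g(X_s, s)\bullet\mathrm{d}W_s,
\]
the whole statement hinges on the single identity
\[
\int_0^t g(X_s, s)\bullet\mathrm{d}W_s = \int_0^t g(X_s, s)\,\mathrm{d}W_s + \int_0^t \frac{\partial g}{\partial x}(X_s, s)\,g(X_s, s)\,\mathrm{d}s,
\]
which converts the H\"anggi–Klimontovich integral on the left into an Itô integral plus a full quadratic-covariation correction. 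Substituting this identity collapses \eqref{HK-SDE} exactly onto \eqref{transformation_SDE}, so the two formulations share the same solutions.

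To prove the conversion identity I would argue directly from the Riemann-sum definitions. Fix a partition $0 = t_0 < t_1 < \cdots < t_n = t$ of mesh tending to zero; the backward integral is the limit of $\sum_i g(X_{t_{i+1}}, t_{i+1})\,(W_{t_{i+1}} - W_{t_i})$, the Itô integral the limit of $\sum_i g(X_{t_i}, t_i)\,(W_{t_{i+1}} - W_{t_i})$, so their difference is $\sum_i \big[g(X_{t_{i+1}}, t_{i+1}) - g(X_{t_i}, t_i)\big]\,(W_{t_{i+1}} - W_{t_i})$. A first-order Taylor expansion of $g$, combined with the short-time increment $X_{t_{i+1}} - X_{t_i} = g(X_{t_i}, t_i)\,(W_{t_{i+1}} - W_{t_i}) + O(t_{i+1}-t_i)$ read off from the SDE, isolates the dominant contribution $\sum_i \frac{\partial g}{\partial x}(X_{t_i}, t_i)\,g(X_{t_i}, t_i)\,(W_{t_{i+1}} - W_{t_i})^2$, which converges in $L^2$ to $\int_0^t \frac{\partial g}{\partial x}(X_s, s)\,g(X_s, s)\,\mathrm{d}s$ by the quadratic-variation property of Brownian motion, all remaining terms being negligible. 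Equivalently, one may phrase this through the quadratic covariation: $\int_0^t Y_s\bullet\mathrm{d}W_s = \int_0^t Y_s\,\mathrm{d}W_s + [Y, W]_t$ with $Y_s = g(X_s, s)$, and Itô's formula together with $\mathrm{d}\langle X, W\rangle_s = g(X_s, s)\,\mathrm{d}s$ gives $\mathrm{d}[Y, W]_s = \frac{\partial g}{\partial x}(X_s, s)\,g(X_s, s)\,\mathrm{d}s$.

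For existence and uniqueness I would observe that the transformed Itô-SDE \eqref{transformation_SDE} has drift $f + \frac{\partial g}{\partial x}g$ and diffusion $g$; by hypothesis each of $f$, $\frac{\partial g}{\partial x}g$ and $g$ is Lipschitz with linear growth in $x$, so the coefficients of \eqref{transformation_SDE} are globally Lipschitz with linear growth, and the classical strong existence–uniqueness theorem for Itô SDEs (see \cite{oksendal}) yields a unique strong solution $X_t$. By the conversion identity this same process is the unique solution of \eqref{HK-SDE}, and reading the argument in reverse gives the converse direction.

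The main obstacle is the rigorous control of the Riemann-sum difference: after the Taylor expansion one must show that the quadratic-variation term genuinely converges to the Lebesgue integral of $\frac{\partial g}{\partial x}g$, while the cross terms and higher-order remainders vanish in the mesh-zero limit. This is precisely where the continuous differentiability of $g$ and the Lipschitz/linear-growth hypothesis on $\frac{\partial g}{\partial x}g$ enter, both to make the correction integral well defined and to force the error estimates to close.
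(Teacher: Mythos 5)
The paper does not prove this theorem at all: it states it and defers the proof entirely to the reference \cite{escudero2023versus}, so there is no in-paper argument to compare against. Your outline is the standard (and essentially correct) conversion argument, and it is the same strategy used in that reference: establish $\int_0^t g(X_s,s)\bullet\mathrm{d}W_s=\int_0^t g(X_s,s)\,\mathrm{d}W_s+\int_0^t \tfrac{\partial g}{\partial x}(X_s,s)g(X_s,s)\,\mathrm{d}s$, collapse the HK-SDE onto the It\^o-SDE, and invoke the classical Lipschitz/linear-growth existence--uniqueness theorem. One point deserves more care than you give it: your Riemann-sum derivation starts from a presumed solution of the HK-SDE and uses that equation to estimate the increments $X_{t_{i+1}}-X_{t_i}$, which is circular, because the backward integral (unlike the It\^o integral) is not defined for general adapted integrands --- its right-endpoint sums converge only when $g(X_\cdot,\cdot)$ already has a well-defined covariation with $W$, i.e.\ when $X$ is already known to be a semimartingale of the right form. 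The clean logical order is the reverse of yours: first solve the It\^o-SDE \eqref{transformation_SDE} by the classical theorem, then verify that for \emph{that} process the backward Riemann sums converge and differ from the It\^o sums by exactly $[g(X,\cdot),W]_t=\int_0^t g_x g\,\mathrm{d}s$, so that this solution satisfies \eqref{HK-SDE}; uniqueness for \eqref{HK-SDE} is then obtained by running the conversion on an arbitrary candidate solution within the class for which the backward integral is defined. With that reordering your argument closes; as written, the ``read the argument in reverse'' step is where the definitional content of the theorem actually lives.
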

This theorem and its corresponding proof can be  found in \cite{escudero2023versus}. This conversion rule allows us to use It\^o's calculus rules and many of its other properties when we are working with HK-SDEs.

\section{HK-stochastic chemotherapy model} \label{}
Let $(\Omega,\mathcal{F},(\mathcal{F}_t)_{t\geq 0},\mathbb{P})$ be a completed filtered probability space in which the Brownian motion $(B_t)_{t\geq 0}$ is defined. To include random fluctuations in the continuous therapy infusion, we perturb Equation \eqref{deterministic_model} with a multiplicative noise term which is is modeled with H\"anggi-Klimontovich calculus. Therefore, the cancer cell process,  the number of cancer cells at time $t$, denoted as $(X_t)_{t \geq 0}$, evolves according to the HK-SDE 
\begin{equation}\label{Chemotherapy_SDE} 	
\mathrm{d}X_t=\big(qX_t-rX_t^{v+1}-cX_t\big)\mathrm{d}t+ \sigma X_t\bullet\mathrm{d}B_t, \qquad X_t|_{t=o}=X_0.
\end{equation}
for $0\leq t \leq T$, $\sigma >0$ and $X_0\in L^2(\Omega)$ is  $\mathcal{F}_0$ -- measurable. From Theorem \ref{conversion}, the Equation \eqref{Chemotherapy_SDE} is almost surely equivalent to Equation It\^o-SDE
\begin{equation}\label{ito_SDE}
\mathrm{d}X_t=\big(qX_t-rX_t^{v+1}-cX_t+\sigma^2 X_t\big)\mathrm{d}t+ \sigma X_t\,\mathrm{d}B_t, \qquad X_t|_{t=o}=X_0.
\end{equation}
Therefore, from Equation \eqref{ito_SDE} the transition probability density $p(x, t)$ of the diffusion process $X_t$ with space state $[0,\infty)$ satisfies the Fokker-Planck Equation (FPE) \cite{kuo}
 \begin{equation}\label{FPE1}
\partial_{t}\,p(x, t)=-\partial_{x}\big[\big(qx-rx^{v+1}-cx+\sigma^2 x\big)p(x,t)\big]+\frac{\sigma^2}{2}\partial_{xx}\,\big[x^2p(x,t)\big],
\end{equation}
with initial condition $$\lim\limits_{t\to 0+}p(x,t)=\delta(x_0-x).$$

\section{Asymptotic Analysis}
For interpretation purposes, the FPE \eqref{FPE1} can be rewritten as the following local conservation equation
\begin{equation}\label{continuity_equation}
	\partial_{t}p_t(x_0,x)+\partial_{x}J(x)=0 \quad\textrm{with} \quad \lim\limits_{t\to 0+}p_t(x_0,x)=\delta(x_0-x),
\end{equation}
where
\begin{equation*}
	J(x):= \big[\big(qx-rx^{v+1}-cx+\sigma^2 x\big)p(x,t)\big]-\frac{\sigma^2}{2}\partial_{x}\,\big[x^2p(x,t)\big].
\end{equation*}
Here $J(x)$ can be interpreted as a probability flow for which, in some cases, it is necessary to establish boundary conditions \cite{Pav, Gar}.

\begin{proposition}\label{Classfication}(Boundary Classification). If $q-c>0$, then the cancer cells process $(X_t)_{t \geq 0}$ is a diffusion process on $[0,\infty)$ for which $0$ is an exit boundary and $\infty$ is a natural boundary in the Feller's sense.
\end{proposition}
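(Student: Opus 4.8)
The plan is to regard \eqref{ito_SDE} as a regular one--dimensional Itô diffusion on $(0,\infty)$, with drift $\mu(x)=(q-c+\sigma^{2})x-rx^{v+1}$ and diffusion coefficient $a(x)=\sigma^{2}x^{2}$, and to classify the endpoints $0$ and $\infty$ by Feller's test. Because Theorem~\ref{conversion} guarantees that \eqref{Chemotherapy_SDE} and \eqref{ito_SDE} share (almost surely) the same solution, the boundary behaviour extracted from the Itô form is the intrinsic boundary behaviour of the cancer--cell process, so the whole analysis may be carried out in the Itô picture. The natural tools are the scale and speed densities
\begin{equation*}
s(x)=\exp\!\left(-\int_{x_{*}}^{x}\frac{2\mu(y)}{a(y)}\,\mathrm{d}y\right),\qquad m(x)=\frac{1}{a(x)\,s(x)},
\end{equation*}
for a fixed interior reference point $x_{*}\in(0,\infty)$.

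First I would compute $s$ and $m$ in closed form. Since
\begin{equation*}
\frac{2\mu(y)}{a(y)}=\frac{2(q-c+\sigma^{2})}{\sigma^{2}}\,\frac{1}{y}-\frac{2r}{\sigma^{2}}\,y^{v-1},
\end{equation*}
a direct integration yields, up to a positive constant, $s(x)=x^{-\beta}\exp(\kappa x^{v})$ and $m(x)\propto x^{\beta-2}\exp(-\kappa x^{v})$, where $\beta:=2+2(q-c)/\sigma^{2}$ and $\kappa:=2r/(\sigma^{2}v)$. The hypothesis $q-c>0$ enters precisely here, forcing $\beta>2$; this is the feature that governs the behaviour at the origin, where the confining term $-rx^{v+1}$ is subdominant and the exponential factors tend to $1$.

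Next I would invoke Feller's test in integral form: for each endpoint one decides the finiteness of the two canonical double integrals
\begin{equation*}
\Sigma=\int s(\eta)\Big(\int m(\xi)\,\mathrm{d}\xi\Big)\,\mathrm{d}\eta,\qquad N=\int m(\eta)\Big(\int s(\xi)\,\mathrm{d}\xi\Big)\,\mathrm{d}\eta,
\end{equation*}
taken over a one--sided neighbourhood of that endpoint, and then reads off the Feller type (regular, exit, entrance or natural) from the standard table according to the pair of verdicts ``accessible / not accessible'' and ``entrance / not entrance''. At $0$ the integrands collapse to the pure powers $x^{-\beta}$ and $x^{\beta-2}$, so the computation is elementary and turns entirely on the inequality $\beta>2$, which makes $m$ integrable at $0$ while $s$ is not. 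At $\infty$ the analysis is dominated by the drift $-rx^{v+1}$ through the factor $\exp(\kappa x^{v})$ in $s$ and its reciprocal in $m$, and the inner integrals must be evaluated by a Laplace--type estimate of the form $\int^{y}\xi^{a}e^{\kappa\xi^{v}}\,\mathrm{d}\xi\sim(\kappa v)^{-1}\,y^{\,a-v+1}\,e^{\kappa y^{v}}$.

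I expect the asymptotics at infinity to be the main obstacle. There the algebraic prefactors $x^{\pm\beta}$ compete with the rapidly varying exponential $e^{\pm\kappa x^{v}}$, so one cannot simply dominate the integrands by powers; the two exponentials must be cancelled via the Laplace estimate inside the double integral before the surviving algebraic tail can be integrated, and this cancellation has to be justified with care. One must also confirm that $-rx^{v+1}$ is genuinely negligible near $0$ but dominant near $\infty$, so that the two endpoints fall into different regimes. Once the resulting finiteness statements for $\Sigma$ and $N$ at each endpoint are established, the Feller classification follows at once.
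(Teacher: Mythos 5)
Your plan is the same as the paper's: pass to the It\^o form, write down the scale density $s$ and the speed density $m$, and decide the finiteness of the two Feller integrals $\Sigma$ and $N$ at each endpoint. Your closed forms are correct, $s(x)\propto x^{-\beta}e^{\kappa x^{v}}$ and $m(x)\propto x^{\beta-2}e^{-\kappa x^{v}}$ with $\beta=2+2(q-c)/\sigma^{2}>2$. The genuine gap is that you stop before recording the verdicts, and the verdicts that your own asymptotics force do not give the classification in the statement. Near $0$ you correctly note that $m$ is integrable while $s$ is not; but $\int_{0}s(\xi)\,\mathrm{d}\xi=\infty$ means $S(0+)=-\infty$, so the origin is inaccessible, and an exit boundary must be accessible. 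Concretely,
\begin{equation*}
\Sigma(0)=\int_{0}^{1}\Bigl(\int_{\xi}^{1}m(\eta)\,\mathrm{d}\eta\Bigr)s(\xi)\,\mathrm{d}\xi\;\geq\; e^{-\kappa}\Bigl(\int_{1/2}^{1}m(\eta)\,\mathrm{d}\eta\Bigr)\int_{0}^{1/2}\xi^{-\beta}\,\mathrm{d}\xi=+\infty,
\end{equation*}
since the inner integral is bounded below by a positive constant while $\xi^{-\beta}$ is not integrable at $0$ (here $\beta>2$). Together with $N(0)=\infty$ (the product $\bigl(\int_{\eta}^{1}s\bigr)m(\eta)$ behaves like $\eta^{1-\beta}\cdot\eta^{\beta-2}=\eta^{-1}$), this places $0$ in the \emph{natural} class, exactly as for geometric Brownian motion, which is what the diffusion looks like near the origin and which never attains $0$. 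So carrying your plan to completion does not prove the proposition; it shows the "exit'' label cannot hold. For comparison, the paper's proof obtains $\Sigma(0)<\infty$ only by replacing $\mathrm{d}S(\xi)=s(\xi)\,\mathrm{d}\xi$ with $\mathrm{d}\xi$, i.e.\ by silently discarding the divergent factor $\xi^{-\beta}$; that inequality goes the wrong way, so you should not expect to reproduce its conclusion by a correct computation.

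The endpoint at infinity has the same flavor. $\Sigma(\infty)=\infty$ is immediate, but the Laplace estimate you propose, $\int_{1}^{\eta}\xi^{-\beta}e^{\kappa\xi^{v}}\,\mathrm{d}\xi\sim(\kappa v)^{-1}\eta^{1-\beta-v}e^{\kappa\eta^{v}}$, makes the integrand of $N(\infty)$ asymptotic to a constant times $\eta^{-1-v}$, whence $N(\infty)<\infty$ and $\infty$ is an \emph{entrance} boundary rather than a natural one (the paper asserts $N(\infty)=\infty$ "in a very similar way'' without computation). In short: your method and formulas are right, but the step "the Feller classification follows at once'' would deliver natural at $0$ and entrance at $\infty$; you need to either flag this discrepancy explicitly or explain why a different convention is being used. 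Note that the corrected classification is the one actually compatible with the existence of a genuine stationary density on $(0,\infty)$ in Proposition \ref{prop_stationary}, since a normalizable stationary law can persist only because the process never reaches $0$.
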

\begin{proof}
    Define the scale function of process $X_t$ as
\begin{equation}\label{scale_function}
    S(\xi)= \int\limits^\xi_1 \exp\Bigg\{-2\int\limits^\eta_1 \frac{q-rz^{v}-c+\sigma^2}{\sigma^2 z}\,\mathrm{d}z\Bigg\}\,\mathrm{d}\eta,
\end{equation}
and their speed density as
\begin{equation}\label{speed_density}
    m(\xi)= \frac{1}{(\sigma \xi)^2} \exp\Bigg\{2\int\limits^\xi_1 \frac{q-rz^{v}-c+\sigma^2}{\sigma^2 z}\,\mathrm{d}z\Bigg\}.
\end{equation}
Furthermore, let the speed measure $M$ induced by the speed density be, defined as follows
\begin{equation}\label{speed_measure}
   \textrm{d} M(\eta)=m(\eta)\,\textrm{d}\eta.
\end{equation}
To determine the nature of the boundary $\infty$, we need to calculate
\begin{equation*}   \Sigma(\infty)=\int\limits_1^\infty\Bigg[\int\limits_1^\xi\mathrm{d}M(\eta)\Bigg]\mathrm{d}S(\xi)\quad \textrm{and}\quad N(\infty)=\int\limits_1^\infty\Bigg[\int\limits_1^\eta\mathrm{d}S(\xi)\Bigg]\mathrm{d}M(\eta).
\end{equation*}
For both cases we combine equations \eqref{scale_function}, \eqref{speed_density} and \eqref{speed_measure}. For the first case, we have
\begin{eqnarray*}
 \Sigma(\infty)&=& \int\limits_1^\infty\Bigg[\int\limits_1^\xi \frac{\eta^{\frac{2}{\sigma^2}(q-c)}}{\sigma^2}\exp\bigg\{-\frac{2r}{v\sigma^2}(\eta^v-1)\bigg\}\,\textrm{d}\eta\Bigg]\exp\Bigg\{-2\int\limits^\xi_1 \frac{q-rz^{v}-c+\sigma^2}{\sigma^2 z}\,\mathrm{d}z\Bigg\}\textrm{d}\xi\\ \nonumber 
&&\qquad\geq\int\limits_1^\infty\Bigg[\int\limits_1^\xi \frac{\eta^{-\frac{2}{\sigma^2}(q-c)-1}}{\sigma^2}\,\textrm{d}\eta\Bigg]\textrm{d}\xi=\frac{1}{2(q-c)}\int\limits_1^\infty\Big[1-\xi^{-2\frac{(q-c)}{\sigma^2}}\Big]\textrm{d}\xi=+\infty,
\end{eqnarray*}
which allows us to deduce that $\Sigma(\infty)=\infty$. In a very similar way, for the second case we obtain that $N(\infty)=\infty$. Therefore, since $\Sigma(\infty)$ and $N(\infty)$ diverge simultaneously, then $\infty$ is a natural boundary in the Feller's sense, see Section 6 of Chapter 15 in \cite{Karlin}. In the sequel, we examine the  nature of the boundary point $0$. Accordingly,
\begin{equation*}
   \Sigma(0)=\int\limits_0^1\Bigg[\int\limits_\xi^1 m(\eta)\,\mathrm{d}\eta\Bigg]\mathrm{d}S(\xi)\leq \int\limits_0^1\Bigg[\int\limits_\xi^1 \frac{\eta^{\frac{2}{\sigma^2}(q-c)}}{\sigma^2}\,\textrm{d}\eta\Bigg]\textrm{d}\xi\leq\frac{1}{\sigma^2}\int \limits_0^1(1-\xi)\,\textrm{d}\xi<+\infty, 
\end{equation*}

\begin{eqnarray*}
N(0)&=&\int\limits_0^1\Bigg[\int\limits_\eta^1\mathrm{d}S(\xi)\Bigg]m(\eta)\,\mathrm{d}\eta\geq\int\limits_0^1\Bigg[\int\limits_\eta^1\exp\bigg\{-2\,\bigg(\frac{q-c+\sigma^2}{\sigma^2}\bigg)\ln\xi\bigg\}\,\textrm{d}\xi\Bigg]m(\eta)\,\mathrm{d}\eta\\ \nonumber 
&&\quad\geq \int\limits_0^1\Bigg[\int\limits_\eta^1\frac{1}{\xi^{2\,(\frac{q-c}{\sigma^2})+2}}\,\textrm{d}\xi\Bigg]\,\mathrm{d}\eta\geq \int\limits_0^1\Bigg[\int\limits_\eta^1\frac{1}{\xi^{2}}\,\textrm{d}\xi\Bigg]\,\mathrm{d}\eta= \int\limits_0^1(\eta^{-1}-1)\,\mathrm{d}\eta=+\infty.
\end{eqnarray*}
Then for the boundary 0 we get $\Sigma(0)<\infty$ and $N(0)=\infty$. Therefore, we can conclude that $0$ is an exit boundary in the Feller's sense \cite{Karlin}.
\end{proof}

Based on the classification established in Proposition \ref{Classfication}, we know that the FPE \eqref{continuity_equation}, admits a unique solution generated for its respective fundamental solution, for which it is not necessary to impose boundary conditions \cite{Feller}. In the following, we explicitly determine the stationary probability density of the FPE that characterizes the steady-state behavior of the process $X_t$, subsequently, we discuss its main properties.

\begin{proposition}\label{prop_stationary}
  Under the assumption established in Proposition \ref{Classfication}, i.e., $q-c>0$, the cancer cells process $(X_t)_{t\geq 0}$ admits a stationary distribution $p_s(x)$ on the unrestricted domain $[0, \infty)$. Moreover, $p_s(x)$ is the unique time-independent probability distribution that solves the Fokker-Planck equation \eqref{continuity_equation} on $[0, \infty)$ and it is given by the explicit formula
	\begin{equation}\label{Stationary_Function}
		p_s(x)=\Bigg[\frac{v}{\Gamma\Big(\frac{2(q-c)+\sigma^2}{v\,\sigma^2}\Big)}\,\bigg(\frac{2r}{v\,\sigma^2}\bigg)^{\frac{1}{v}\big(\frac{2(q-c)}{\sigma^2}+1\big)}\Bigg]\,e^{-\frac{2rx^v}{v\sigma^2}}x^{2\,(\frac{q-c}{\sigma^2})},
	\end{equation}
where $\Gamma(x)$ is the gamma function.
\end{proposition}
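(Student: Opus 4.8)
The plan is to obtain $p_s$ as the unique normalizable time-independent solution of the Fokker--Planck equation \eqref{continuity_equation}, that is, the density for which $\partial_t p_t \equiv 0$. Setting the time derivative to zero in \eqref{continuity_equation} forces $\partial_x J(x) = 0$, so the probability flux $J$ is constant in $x$. The first genuine step is to argue that this constant vanishes: since $\infty$ is a natural boundary by Proposition~\ref{Classfication}, no probability current flows across it, and indeed any constant-flux solution with $J \neq 0$ fails to be integrable on $[0,\infty)$; hence normalizability forces $J \equiv 0$. One then checks \emph{a posteriori} that the density so obtained also carries zero flux at the exit boundary $0$, which is consistent with the Feller classification at both ends.

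With $J \equiv 0$ the problem collapses to the first-order linear ODE
\begin{equation*}
\big((q-c+\sigma^2)x - rx^{v+1}\big)\,p_s(x) = \frac{\sigma^2}{2}\,\frac{\mathrm{d}}{\mathrm{d}x}\big[x^2 p_s(x)\big].
\end{equation*}
Writing $u(x) = x^2 p_s(x)$ turns this into the separable equation $u'/u = \frac{2(q-c+\sigma^2)}{\sigma^2 x} - \frac{2r}{\sigma^2}x^{v-1}$, which I would integrate directly. This yields $u(x) = K\,x^{2(q-c+\sigma^2)/\sigma^2}\exp\{-2rx^v/(v\sigma^2)\}$ and hence $p_s(x) = K\,x^{2(q-c)/\sigma^2}\exp\{-2rx^v/(v\sigma^2)\}$, since dividing by $x^2$ lowers the exponent by exactly $2$. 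This already reproduces the functional form in \eqref{Stationary_Function}; it remains only to pin down $K$.

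To normalize I would impose $\int_0^\infty p_s(x)\,\mathrm{d}x = 1$ and evaluate the integral by the substitution $y = \frac{2r}{v\sigma^2}x^v$, which converts $\int_0^\infty x^{a}\exp\{-bx^v\}\,\mathrm{d}x$, with $a = 2(q-c)/\sigma^2$ and $b = 2r/(v\sigma^2)$, into the standard Gamma integral $\frac{1}{v}\,b^{-(a+1)/v}\,\Gamma\big((a+1)/v\big)$. Here $(a+1)/v = \big(2(q-c)+\sigma^2\big)/(v\sigma^2)$, exactly the argument of $\Gamma$ appearing in \eqref{Stationary_Function}, and inverting gives the stated constant $K$. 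Convergence at $\infty$ is automatic from the super-exponential factor $\exp\{-bx^v\}$ (with $b,v>0$), while convergence at $0$ requires $a > -1$, i.e.\ $2(q-c)+\sigma^2>0$, which is guaranteed by the hypothesis $q-c>0$; this is precisely where that hypothesis is genuinely used.

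For uniqueness I would note that the zero-flux equation is a first-order linear ODE whose nonnegative solutions form a one-dimensional family, so any two time-independent probability densities differ by a multiplicative constant and the normalization selects a single one; alternatively, this is the uniqueness already guaranteed by the Feller well-posedness of \eqref{continuity_equation} under the boundary classification of Proposition~\ref{Classfication}. The step I expect to be most delicate is the rigorous justification that the constant flux is zero, distinguishing the genuine normalizable stationary density from spurious constant-flux solutions: because the boundary $0$ is of exit type rather than reflecting, this cannot be read off from a standard no-flux boundary condition and must instead be argued from integrability together with the natural boundary at $\infty$.
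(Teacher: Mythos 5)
Your proposal is correct and follows essentially the same route as the paper: reduce the stationary Fokker--Planck equation to the zero-flux first-order ODE, solve it to get $p_s(x)\propto x^{2(q-c)/\sigma^2}e^{-2rx^v/(v\sigma^2)}$, and normalize via a Gamma-type integral. The only cosmetic differences are that you evaluate the normalization constant by a direct substitution to the standard Gamma integral where the paper uses the incomplete gamma function and its asymptotics, and you are rather more explicit than the paper about why the constant flux must vanish (the paper simply invokes the boundary classification to dispense with boundary conditions).
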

\begin{proof}
    First, we compute the explicit solution formula. Every  time-independent probability distribution satisfies $\partial_tp_s(x)=0$, Therefore, the Equation \eqref{continuity_equation} can be written in terms of the probability flow as $\partial_x J(x)=0$. As a consequence of Proposition \ref{Classfication}, this equation does not need boundaries conditions, so the equation reduces to the second-order ordinary differential equation
    \begin{equation*}
	\frac{\textrm{d}}{\textrm{d}x}\big[\big(qx-rx^{v+1}-cx+\sigma^2 x\big)p_s(x)\big]=\frac{\sigma^2}{2}\frac{\textrm{d}^2}{\textrm{d} x^2}\,\big[x^2p_s(x\big)], 
\end{equation*}
from which the valid fundamental solution is obtained
\begin{equation}\label{fundamental_solution}
		p_s(x)=k_0\, e^{-\frac{2rx^v}{v\sigma^2}}x^{2\,(\frac{q-c}{\sigma^2})},
	\end{equation}
 where $k_0$ is the normalization constant. In the sequel, to guarantee the existence and uniqueness of the solution \eqref{fundamental_solution}, we show that $k_0$ is finite
 \begin{equation}\label{k_0}
     k_0^{-1}=\int\limits^\infty_0 e^{-\frac{2rx^v}{v\sigma^2}}x^{2\,(\frac{q-c}{\sigma^2})}\,\textrm{d}x= -\frac{1}{v}\,\bigg(\frac{2r}{v\,\sigma^2}\bigg)^{-\frac{1}{v}\big(\frac{2(q-c)}{\sigma^2}+1\big)} {\Gamma\Bigg(\frac{\frac{2(q-c)}{\sigma^2}+1}{v}, \frac{2rx^v}{v\sigma^2}\Bigg)}\Bigg|_0^\infty,
 \end{equation}
 where $\Gamma(s,x)$ is the incomplete gamma function, for which we have the very useful asymptotic property $\frac{\Gamma(s,x)}{x^{s-1}\,e^{-x}}\longrightarrow 1$ as $x\longrightarrow\infty$, see \cite{Handbook}. Making use of this property in Equation \eqref{k_0}, we obtain
\begin{eqnarray*}
 k_0^{-1}&=& -\frac{1}{v}\,\bigg(\frac{2r}{v\,\sigma^2}\bigg)^{-\frac{1}{v}\big(\frac{2(q-c)}{\sigma^2}+1\big)} \Bigg[\lim\limits_{x\to \infty}{\Gamma\Bigg(\frac{\frac{2(q-c)}{\sigma^2}+1}{v}, \frac{2rx^v}{v\sigma^2}\Bigg)}-{\Gamma\Bigg(\frac{\frac{2(q-c)}{\sigma^2}+1}{v}, 0\Bigg)\Bigg]}\\ \nonumber
 &=& -\frac{1}{v}\,\bigg(\frac{2r}{v\,\sigma^2}\bigg)^{-\frac{1}{v}\big(\frac{2(q-c)}{\sigma^2}+1\big)} \Bigg[\lim\limits_{x\to \infty}\bigg({\frac{2rx^v}{v\sigma^2}}\bigg)^{\Big(\frac{\frac{2(q-c)}{\sigma^2}+1}{v}-1\Big)}e^{-{\frac{2rx^v}{v\sigma^2}}}-{\Gamma\Bigg(\frac{\frac{2(q-c)}{\sigma^2}+1}{v}\Bigg)\Bigg]}\\ \nonumber
 &=&\frac{1}{v}\,\bigg(\frac{2r}{v\,\sigma^2}\bigg)^{-\frac{1}{v}\big(\frac{2(q-c)}{\sigma^2}+1\big)} \Gamma\Bigg(\frac{\frac{2(q-c)}{\sigma^2}+1}{v}\Bigg).
\end{eqnarray*}
Therefore, under the assumptions this proposition, we have 
\begin{equation}\label{k_0_expplicito}
    k_0=\frac{v}{\Gamma\Big(\frac{\frac{2(q-c)}{\sigma^2}+1}{v}\Big)}\,\bigg(\frac{2r}{v\,\sigma^2}\bigg)^{\frac{1}{v}\big(\frac{2(q-c)}{\sigma^2}+1\big)}<\infty.
\end{equation}
 To finish the proof, Equation \eqref{k_0_expplicito} is substituted into Equation \eqref{fundamental_solution}.
\end{proof} 
\begin{figure}[h]
\centering
\includegraphics[width=0.7\textwidth]{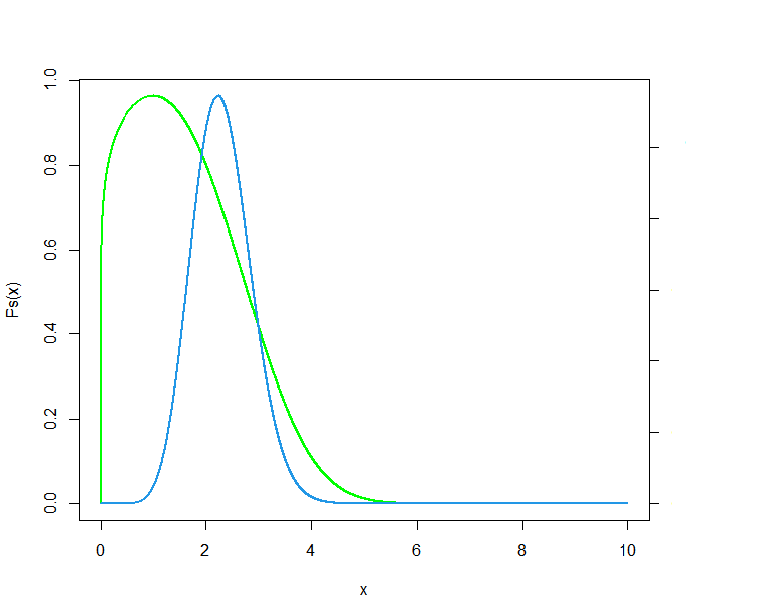}
\caption{Stationary distribution of  $X_t$ for $q-c>0$ and different values of noise intensity $\sigma^2$. (Light blue curve) $\sigma^2>2(q-c)$. (Green line) $\sigma^2\leq2(q-c)$. }
\label{fig_non_transition}
\end{figure}
\begin{remark}
    In the case that the stationary distribution \eqref{Stationary_Function} does not exist, zero is stationary point: drift and diffusion vanish simultaneously for $x = 0$. Then zero becomes is a attractor point, the stationary probability mass will be entirely concentrated on zero. Speaking in terms of probability one can say that $p_s(x)=\delta(x)$.
\end{remark}
\section{Non-equilibrium phase transitions} \label{subm}
It is known that the critical points of the stationary distribution $p_s(x)$ are the most appropriate indicators to identify phase transitions in the steady-state behavior of stochastic process $X_t$ \cite{Lefever1984}. In the following theorem we determine the critical points of $p_s(x)$. Furthermore, we establish an interesting connection between the steady-state behavior of the deterministic dynamical system governed by equation \eqref{deterministic_model} and the stationary distribution of its stochastic counterpart governed by the HK-SDE \eqref{Chemotherapy_SDE}.

\begin{theorem}\label{equilibrium}(Robustness of deterministic behavior). if $q-c>0$, then the critical points of the stationary
distribution \eqref{Stationary_Function}, i.e., the global minimum and maximum points of $p_s(x)$,  coincides with the fixed points of the dynamical
system generated by \eqref{deterministic_model}
\begin{equation}\label{dynamic_system}
\frac{\mathrm{d}x_t}{\mathrm{d}t}=x_t(q-rx^v_t)-cx_t.
\end{equation}
Furthermore, the global maxima point of \eqref{Stationary_Function} coincide with the stable fixed points of \eqref{dynamic_system}. Correspondingly, the global minima point of \eqref{Stationary_Function} coincide with the unstable fixed points of \eqref{dynamic_system}.
\end{theorem}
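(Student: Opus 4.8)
The plan is to exhibit a single identity tying the logarithmic derivative of the stationary density $p_s$ to the drift of the deterministic equation \eqref{dynamic_system}, and then to read off both the location and the type of each extremum directly from the sign of that drift. As a preliminary I would record the fixed points of \eqref{dynamic_system}: writing $f(x):=x(q-rx^v)-cx=(q-c)x-rx^{v+1}=x\big[(q-c)-rx^v\big]$, the zeros of $f$ on $[0,\infty)$ are $x=0$ and $x^{*}:=\big(\frac{q-c}{r}\big)^{1/v}$, the latter being real and positive precisely because $q-c>0$. Since $f'(0)=q-c>0$ and $f'(x^{*})=-v(q-c)<0$, the point $x=0$ is unstable and $x^{*}$ is stable, as already observed below \eqref{deterministic_model}.

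Next I would differentiate $p_s$ from \eqref{Stationary_Function}. Because $p_s>0$ on $(0,\infty)$ it is cleaner to differentiate $\ln p_s$, and a short computation gives
\[
\frac{\mathrm{d}}{\mathrm{d}x}\ln p_s(x)=-\frac{2r}{\sigma^2}x^{v-1}+\frac{2(q-c)}{\sigma^2 x}=\frac{2}{\sigma^2 x^2}\big[(q-c)x-rx^{v+1}\big]=\frac{2}{\sigma^2 x^2}\,f(x).
\]
Hence $p_s'(x)=\frac{2}{\sigma^2 x^2}\,f(x)\,p_s(x)$ for $x>0$. This is the heart of the argument: on $(0,\infty)$ the sign of $p_s'$ equals the sign of the deterministic drift $f$, so the interior critical points of $p_s$ are exactly the nonzero zeros of $f$; solving $f(x)=0$ returns the unique interior critical point $x^{*}$.

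Finally I would upgrade these local statements to global ones using the boundary behaviour. Since $q-c>0$ the exponent $2(q-c)/\sigma^2$ is positive, so $p_s(0)=0$, while the exponential factor $e^{-2rx^v/(v\sigma^2)}$ overwhelms the power $x^{2(q-c)/\sigma^2}$ as $x\to\infty$, forcing $p_s(x)\to0$; as $p_s>0$ on $(0,\infty)$, the value $0$ is attained only at $x=0$, which is therefore the global minimum point and coincides with the unstable fixed point. Because $f>0$ on $(0,x^{*})$ and $f<0$ on $(x^{*},\infty)$, the identity above shows $p_s$ rises on $(0,x^{*})$ and falls on $(x^{*},\infty)$, so $x^{*}$ is the global maximum; the sign change of $f$ from $+$ to $-$ there is exactly the stability condition $f'(x^{*})<0$, which ties the maximum/stable and minimum/unstable correspondences together.

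The differentiation and asymptotics are routine; the one delicate point, and the step I expect to require care, is the boundary $x=0$. It is a global minimum as a boundary value rather than as a stationary point, and whether $p_s'$ vanishes, stays finite, or blows up at $0$ is governed by the sign of $2(q-c)-\sigma^2$ --- precisely the quantity that produces the shape transition in Figure \ref{fig_non_transition}. I would therefore phrase the minimum claim as \emph{the global minimum value being attained at $x=0$}, a statement that holds uniformly in $\sigma$, so that the extremal structure, and hence the agreement with the deterministic fixed points, is manifestly robust across both noise regimes.
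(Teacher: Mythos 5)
Your proposal is correct and follows essentially the same route as the paper: differentiate the stationary density, observe that its sign (for $x>0$) is that of the deterministic drift $f$, so the critical points are the fixed points, and use $p_s(0)=0$ together with $p_s>0$ on $(0,\infty)$ to get the global minimum at $0$ and the global maximum at $x^{*}=\big(\frac{q-c}{r}\big)^{1/v}$. Your only departures are cosmetic but slightly cleaner: you establish the global maximum by the monotonicity of $p_s$ on $(0,x^{*})$ and $(x^{*},\infty)$ rather than by the second-derivative test, and you handle the boundary point $x=0$ more carefully (noting that $p_s'$ need not vanish there, depending on the sign of $2(q-c)-\sigma^2$), a subtlety the paper's proof glosses over by listing $x=0$ as a critical point.
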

\begin{proof}
    First of all note that under the current assumptions, equation \eqref{dynamic_system} presents unique and global-in-time solutions to its associated initial value problems; therefore it generates a well-defined dynamical system. Moreover, these assumptions guarantee that $p_s(x)$ is differentiable; thus by direct differentiation of \eqref{Stationary_Function} we get
    \begin{equation}\label{dif_stationary}
        \frac{\mathrm{d}p_s(x)}{\mathrm{d}x}= \frac{2}{\sigma^2}\Bigg[\frac{v}{\Gamma\Big(\frac{2(q-c)+\sigma^2}{v\,\sigma^2}\Big)}\,\bigg(\frac{2r}{v\,\sigma^2}\bigg)^{\frac{1}{v}\big(\frac{2(q-c)}{\sigma^2}+1\big)}\Bigg]\,x^{\frac{2(q-c)}{\sigma^2}-1}\,e^{-\frac{2rx^v}{v\sigma^2}}(q-c-rx^v).
    \end{equation}
    From Equation \eqref{dif_stationary}, notice that $x=0$ and $x=(\frac{q-c}{r})^{\frac{1}{v}}$ are critical points of $p_s(x)$, which correspond to the fixed points of \eqref{dynamic_system}, see Section \ref{intro}. On the one hand, it is known that $x=0$ is a unstable fixed points and $x=(\frac{q-c}{c})^{\frac{1}{v}}$ is a stable fixed points for the dynamical
    system generated by \eqref{dynamic_system}. On the other hand, it is easy to verify that 
    \begin{equation*}
       \frac{\mathrm{d}^2p_s(x)}{\mathrm{d}x^2}\bigg|_{x=(\frac{q-c}{r})^{\frac{1}{v}}}<0 \qquad\textrm{and}\qquad p_s(0)=0.
    \end{equation*}
    Since $p_s(x)\geq 0$ for all $x \in [0,\infty)$, we conclude that $x=0$ is the global minimum and $x=(\frac{q-c}{r})^{\frac{1}{v}}$ is the global maximum of $p_s(x)$.
\end{proof}

\begin{remark}
  This sort of robustness of the deterministic behavior is the consequence of the H\"anggi-Klimontovich calculus. Other stochastic calculus, such as It\^o or Stratonovich, do not possess this property, see Chapter 6 in \cite{Lefever1984}. 
 Note that this does not mean in either case that the solution to the stochastic equation is simply the noisy counterpart of the solution to the deterministic equation. For instance, transitions between different stable states, which are forbidden in the absence of noise, may happen in the presence of stochastic forcing. 
\end{remark}

From Theorem \ref{equilibrium} we conclude that: If $q-c>0$, the point $x=(\frac{q-c}{r})^{\frac{1}{v}}$ is always the maximum of $p_s(x)$, which corresponds to the most probable value. This is so, regardless of the value assumed by the intensity of fluctuations environment $\sigma$. In the same way, independent of $\sigma$, the point $x=0$ is just an unstable fixed point, which corresponds to the least probable value. In other words, the presence of noise does not alter the therapy result obtained through the deterministic model. Therefore, if $q-c>0$ then there are no abrupt changes in the shape of the stationary distribution, that is, there are no noise- induced phase transitions, see Figure \ref{fig_non_transition}. 
\begin{figure}[h]
\centering
\includegraphics[width=0.7\textwidth]{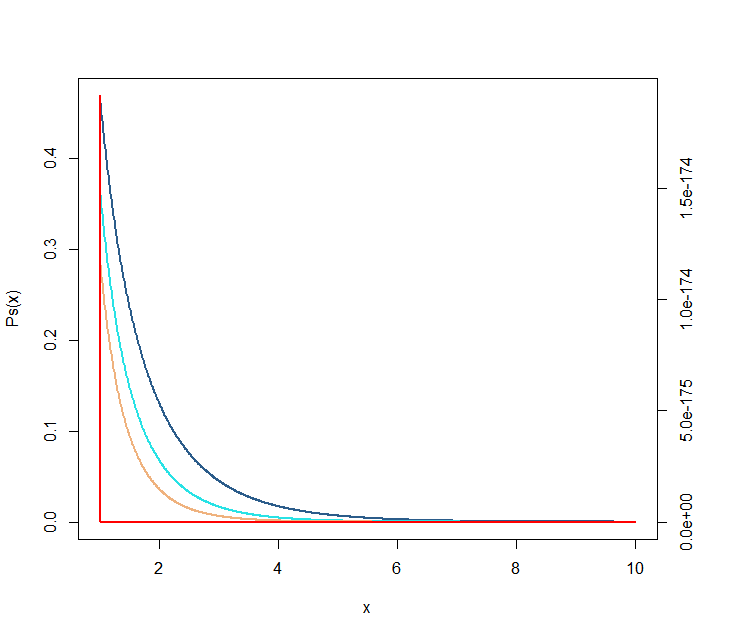}
\caption{Stationary distribution of $X_t$ for $q-c\leq 0$ and different values of noise intensity $\sigma^2$. (Blue, light blue and orange curve) $\sigma^2>2(c-q)$. (Red line) $\sigma^2\leq2(c-q)$.} 
\label{fig_transition}
\end{figure}

\begin{corollary}\label{phase_transition}(Noise-induced eradication). If $q-c\leq 0$, the stationary distribution $p_s(x)$ has a transition at $\sigma^2=2(c-q)$. Therefore, if $\sigma^2>2(c-q)$ then $p_s(x)$ is decreasing, for which the point $x=0$ is its global maximum. Correspondingly, if $\sigma^2\leq2(c-q)$ then $p_s(x)$ is unbounded and entirely concentrated on zero, i.e., $p_s(x)=\delta(x)$.
\end{corollary}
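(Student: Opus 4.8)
The plan is to read off every assertion directly from the explicit density \eqref{Stationary_Function}, whose $x$--dependence is carried entirely by the factor $x^{\alpha}e^{-2rx^{v}/(v\sigma^{2})}$ with exponent $\alpha:=\tfrac{2(q-c)}{\sigma^{2}}$. Under the hypothesis $q-c\le 0$ one has $\alpha\le 0$, and the whole statement reduces to tracking how $\alpha$ crosses the value $-1$ as $\sigma^{2}$ crosses $2(c-q)$.

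First I would settle monotonicity, which holds uniformly on both sides of the transition. The derivative already computed in the proof of Theorem~\ref{equilibrium}, namely \eqref{dif_stationary}, factors as a strictly positive prefactor times $x^{\alpha-1}e^{-2rx^{v}/(v\sigma^{2})}(q-c-rx^{v})$. Since $q-c\le 0$ and $rx^{v}>0$ for every $x>0$, the sign-determining factor $q-c-rx^{v}$ is strictly negative on $(0,\infty)$; hence $p_s$ is strictly decreasing there and its supremum sits at the origin. This already identifies $x=0$ as the (possibly singular) maximiser in either regime.

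Next I would locate the transition by testing integrability of the unnormalised density. At infinity the stretched-exponential factor forces convergence for every $\alpha$, so the only issue is the origin, where $e^{-2rx^{v}/(v\sigma^{2})}\to 1$ and the integrand is comparable to $x^{\alpha}$; since $\int_{0}^{1}x^{\alpha}\,\mathrm{d}x$ converges precisely when $\alpha>-1$, i.e.\ when $\sigma^{2}>2(c-q)$, the threshold $\sigma^{2}=2(c-q)$ is exactly the announced transition point. In the supercritical regime $\sigma^{2}>2(c-q)$ the normalisation constant $k_0$ of Proposition~\ref{prop_stationary} is finite---its computation there uses only $\alpha>-1$---so $p_s$ is a genuine probability density, strictly decreasing, with its maximum at $x=0$ (a finite value when $\alpha=0$, an integrable blow-up when $-1<\alpha<0$).

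The delicate step is the subcritical regime $\sigma^{2}\le 2(c-q)$, where $\alpha\le -1$ and the integral diverges at the origin, so no absolutely continuous stationary law exists. The conclusion $p_s(x)=\delta(x)$ cannot follow from integration alone; instead I would argue dynamically, as anticipated in the remark after Proposition~\ref{prop_stationary}. In the It\^o form \eqref{ito_SDE} both the drift $qx-rx^{v+1}-cx+\sigma^{2}x$ and the diffusion coefficient $\sigma^{2}x^{2}$ vanish at $x=0$, so the origin is an absorbing, attracting state onto which all stationary mass must collapse, yielding the Dirac distribution. Making this last identification rigorous---excluding any other invariant measure once the candidate density ceases to be normalisable, and separating the boundary behaviour of $0$ from the $q-c>0$ case of Proposition~\ref{Classfication}---is the main obstacle.
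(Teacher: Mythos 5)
Your proposal is correct and follows essentially the route the paper itself takes: the paper offers no formal proof of this corollary, relying instead on the explicit formula \eqref{Stationary_Function}, the sign of the factor $q-c-rx^{v}$ in \eqref{dif_stationary}, the integrability threshold $2(q-c)/\sigma^{2}>-1$ for the normalisation constant, and the remark after Proposition \ref{prop_stationary} for the degenerate case. You correctly identify the one genuine gap shared with the paper --- that $p_s(x)=\delta(x)$ in the subcritical regime does not follow from non-normalisability alone and would require a boundary/invariant-measure argument for $q-c\le 0$, which Proposition \ref{Classfication} does not cover.
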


The noise-induced phase transition described in Corollary \ref{phase_transition} corresponds to an abrupt change in the shape of the stationary distribution $p_s(x)$: If $\sigma^2>2(c-q)$, then the stationary distribution is a genuine distribution, for which $x=0$ is the most probable value. On the other hand, if $\sigma^2 \leq 2(c-q)$, then the stationary distribution is a degenerate distribution, for which $x=0$ is an attractor point, see Figure \ref{fig_transition}. In other words, at $\sigma^2=2(c-q)$, as the intensity of the fluctuations decreases, there is a transition between the ``small tumor highly probable regime'' to the `` tumor eradication almost surely regime'' . This shows that, under suitable conditions, the cancer cell process $X_t$ can be made to undergo a transition by increasing or decreasing the intensity of the noise fluctuations. 
\section{Conclusions}

Theorem \ref{equilibrium} and Corollary \ref{phase_transition} have coherent biological consequences: If $q-c\leq 0$, then the most probable steady state for tumor size corresponds to non-zero values for $\sigma^2<2(p-q)$, while that for $\sigma^2\geq 2(p-q)$, the tumor eradication occurs with probability 1. However, the case $q-c>0$ has a more interesting consequence since noise-induced tumor eradication cannot occur with probability 1, this is true for any value assumed for the noise intensity $\sigma^2$. This scenario is profoundly different from that  “noise-induced tumor eradication” presented in \cite{donofrio, donofrio2} and is due to the use of the H\"anggi-Klimontovich calculus. Consequently, our results seem to be more realistic from a biological aspect.

In work \cite{donofrio}, the authors recommend the use of Fuzzy theory and the theory of differential equations with bounded noise perturbations to eliminate the presence of the “noise-induced tumor eradication”, considering it synthetic and without coherent biological meaning. But they also admit that both suggested theories still do not have the necessary theoretical development or lose the desirable properties of It\^o calculus. It should be noted that our proposal, in addition to making greater biological sense, partly preserves desirable properties of the It\^o calculus since there is a very close and formally defined relationship between the It\^o calculus and the H\"anggi-Klimontovich  calculus \cite{escudero2023versus}.

\section*{Acknowledgments}
This work has been partially supported by the Research Institute of the Faculty of Economics, Statistics and Social Sciences (IECOS) and the Vice-Rectorate of Research of the National University of Engineering (VRI-UNI) through the research project 25-2023-002472.

\end{document}